\documentclass[reqno,a4paper]{amsart}

\usepackage[latin1]{inputenc}
\usepackage[english]{babel}
\usepackage{eucal,amsfonts,amssymb,amsmath,amsthm,epsfig,mathrsfs}
\usepackage{cancel,soul}
\usepackage{color}
\textheight 8.8in
\textwidth 5.8in
\voffset -0.15in
\hoffset -0.4in
\usepackage{amscd,amsxtra}
\usepackage{enumerate}
\usepackage{latexsym}
\usepackage{bm}

\allowdisplaybreaks

\newcounter{ipotesi}

\Alph{ipotesi}

\newtheorem{thm}{Theorem}[section]
{\rm}
{\rm}

\newtheorem{cor}[thm]{Corollary}
\newtheorem{pro}[thm]{Proposition}

\newtheorem{rmk}[thm]{Remark}{\rm}

\newcounter{parentenv}

\newcommand{\R}{{\mathbb R}}

\newcommand{\F}{\mathcal F}

\newcommand{\ra}{\rightarrow}

\newcommand{\set}[1]{{\left\{#1\right\}}}

\newcommand{\gen}[1]{{\left\langle #1\right\rangle}}
\newcommand{\abs}[1]{{\left|#1\right|}}
\newcommand{\norm}[1]{{\left\|#1\right\|}}

\newcommand{\eqsys}[1]{{\left\{\begin{array}{ll}#1\end{array}\right.}}
\newcommand{\tc}{\, \middle |\,}

 % Si usa \usepackage{stmaryrd}

                                              \newcommand{\parti}{\operatorname{\mathcal{P}}}
\newcommand{\con}{\operatorname{\mathscr{C}}}

\newcommand{\weak}{\operatorname{\mathnormal{w}}}

\newcommand{\borel}{{\operatorname{Bo}}}

\newcommand{\continuum}{\operatorname{\mathfrak{c}}}
\newcommand{\baire}{{\operatorname{Ba}}}

\makeindex

\begin{document}

\frenchspacing

\title[Weak-star and norm Borel sets in the dual of the space of continuous functions]{A note on weak-star and norm Borel sets in the dual of the space of continuous functions}

\author[S. Ferrari]{{S. Ferrari}}

\address[S. Ferrari]{Dipartimento di Scienze Matematiche, Fisiche e Informatiche, Universit\`a degli Studi di Parma, Parco Area delle Scienze 53/A, 43124 Parma, Italy.}
\email{\textcolor[rgb]{0.00,0.00,0.84}{simone.ferrari1@unipr.it}}
\subjclass[2010]{28A05, 54H05.}

\keywords{Borel $\sigma$-algebra, weak-star topology, Compact sets with only Borel subsets.}

\begin{abstract}
Let $\borel(T,\tau)$ be the Borel $\sigma$-algebra generated by the topology $\tau$ on $T$. In this paper we show that if $K$ is a Hausdorff compact space, then every subset of $K$ is a Borel set if, and only if, 
\[\borel(C^*(K),\weak^*)=\borel(C^*(K),\norm{\cdot});\]
where $\weak^*$ denotes the weak-star topology and $\norm{\cdot}$ is the dual norm with respect to the sup-norm on the space of real-valued continuous functions $C(K)$. Furthermore we study the topological properties of the Hausdorff compact spaces $K$ such that every subset is a Borel set. In particular we show that, if the axiom of choice holds true, then $K$ is scattered.
\end{abstract}

\date{\today}

\maketitle

\section{Introduction}

Due to the presence of several important topologies on a Banach space it is natural to ask if there is any relationship between the Borel $\sigma$-algebras generated by these different topologies. Many authors have studied the relationship between the Borel sets generated by the weak topology and the one generated by the norm topology (see for example \cite{Edgar1,Edgar2,Oncina,Raja2}) and have found various conditions for the coincidence of this two classes. In particular it is shown that the coincidence of the above $\sigma$-algebras is related to the existence of special types of equivalent norms (see \cite{FOOR16,FOR16,FOR19,MOTV09,Oncina,Raja2}, for a study of these types of equivalent renormings).

%, in particular if a Banach space admits an equivalent Kadec norm, i.e. the norm and the weak topologies agree on the unit sphere, then every norm open set is a $(\mathcal{F}\cap\mathcal{G})_\sigma$ weak set, meaning the countable union of the intersection of a weak open and a weak closed set.

The subject of this paper is to understand the topology of a compact space $K$ such that the weak-star and norm Borel structure of $C^*(K)$ agree.

\section{Notations and preliminaries}

A family $\mathcal{N}$ of subsets of a topological space $(T,\tau)$ is a \emph{network} for $T$ if for every point  $t\in T$ and every neighbourhood $U$ of $t$ there exists $N\in\mathcal{N}$ such that $t\in N\subseteq U$ (see \cite{Engelking} for more informations). We remark that the definition of network differs form the definition of basis of a topological space, indeed it is not required for the sets of a network to be open.

We will denote by $P(X)$ the power set of a set $X$.

Let $\tau$ be a topology on the set $T$, then we will use the symbol $\borel(T,\tau)$ to denote the Borel $\sigma$-algebra generated by the topology $\tau$ on $T$, while we will denote with $\baire(T,\tau)$ the Baire $\sigma$-algebra, i.e. the smallest $\sigma$-algebra with respect to which all the $\tau$-continuous real-valued functions are measurable. Finally the symbol $\baire_p(T,\tau)$ will denote the $\sigma$-algebra of the sets with the Baire property in $(T,\tau)$, i.e. all sets of the form $U\Delta M$, where $U$ is open and $M$ is of first category (see \cite[Theorem 4.1]{Oxtoby}).

Given two topological spaces $(X,\tau_X)$ and $(Y,\tau_Y)$, then a map $F:X\ra P(Y)$ is said to be upper semicontinuous at a point $x\in X$ (\emph{usc at $x$}, for short) if, for every open set $V$ containing $F(x)$, there exists a neighbourhood $U$ of $x$ such that
\[F(U)=\bigcup\set{F(u)\tc u\in U}\subseteq V.\]
We say that $F$ is upper semicontinuous (\emph{usc}, for short) if it is upper semicontinuous at $x$ for every point $x\in X$. We say that a map $F$ is \emph{usco} if it is usc and takes non-empty compact values.

We will denote by $\aleph_0$ and $\continuum$ the cardinality of the set of the natural numbers and of its power set, respectively. We will use the symbol $\omega_1$ to denote the first uncountable ordinal.

\section{The main results}

Throughout this section $(K,\tau)$ will denote a Hausdorff compact space and $C(K)$ the space of real-valued continuous functions on $K$ endowed with the sup-norm. In the main theorem we prove that, if $\borel(C^*(K),\weak^*)=\borel(C^*(K),\weak)$, then 
\[\borel(C^*(K),\weak^*)=\borel(C^*(K),\norm{\cdot}).\]

\begin{pro}
If $K$ contains a non-Borel subset, then $\borel(C^*(K),\weak^*)\neq\borel(C^*(K),\weak)$.
\end{pro}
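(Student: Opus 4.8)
The plan is to contrapose: assuming $\borel(C^*(K),\weak^*)=\borel(C^*(K),\weak)$, I will show every subset of $K$ is Borel. The natural bridge is the canonical embedding $\delta\colon K\to C^*(K)$ sending $t$ to the evaluation functional $\delta_t$. This map is a homeomorphism from $(K,\tau)$ onto $(\delta(K),\weak^*)$, since the weak-star topology on $C^*(K)$ restricted to evaluations is exactly the topology of pointwise convergence, which on $K$ coincides with $\tau$ (functions in $C(K)$ separate points and the identity $K\to (K,\text{pointwise})$ is a continuous bijection between compacta). So $\delta(K)$ is weak-star compact, hence weak-star closed, hence a weak-star Borel subset of $C^*(K)$.

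First I would record that for any $A\subseteq K$, the set $\delta(A)\subseteq \delta(K)$ is weak-star Borel in $C^*(K)$ if and only if $A$ is Borel in $(K,\tau)$; this is immediate from the homeomorphism together with the fact that $\delta(K)$ itself is weak-star Borel, so relative weak-star Borel subsets of $\delta(K)$ are exactly its weak-star Borel subsets that sit inside it. Next, under the standing hypothesis $\borel(C^*(K),\weak^*)=\borel(C^*(K),\weak)$, every such $\delta(A)$ is also a weak Borel set. Now the crucial observation: $\delta(K)$, being norm-bounded (each $\delta_t$ has norm $1$) and weak-star compact, need not be weak-compact, but I only need to compare the weak and norm Borel structures on the specific bounded piece $\delta(K)$. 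The key step is to exhibit, for an arbitrary $A\subseteq K$, a way of writing $\delta(A)$ as a set obtained from $\tau$-Borel operations — and the cleanest route is: if $\delta(A)$ were weak-Borel, I would like to conclude $\delta(A)$ is obtained from weak-star closed (equivalently $\tau$-closed) sets by countable Boolean operations relativized to $\delta(K)$, because on the compact set $\delta(K)$ the weak and weak-star topologies have the same closed sets that matter after intersecting with $\delta(K)$ — indeed a convex weak-closed set is weak-star closed, but single points $\{\delta_t\}$ are weak-closed and weak-star closed alike, and more generally I would argue that the trace of the weak topology on $\delta(K)$ refines pointwise convergence yet every weak-Borel subset of the bounded set $\delta(K)$, when intersected back with $\delta(K)$, lands in the $\sigma$-algebra generated by weak-star-closed subsets of $\delta(K)$, which is $\borel(\delta(K),\weak^*)$.

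Assembling: $A\subseteq K$ arbitrary $\Rightarrow$ $\delta(A)=\delta(A)\cap\delta(K)$; the generating weak-closed half-spaces $\{x : f(x)\le c\}$ for $f\in C^{**}(K)$ intersect $\delta(K)$ in sets that I must show are $\tau$-Borel in $K$. This is exactly where the main obstacle lies, since a general element of $C^{**}(K)$ need not be weak-star continuous on $C^*(K)$, so $f\circ\delta$ need not be $\tau$-continuous. The resolution I expect: $f\circ\delta\colon K\to\R$ is a bounded function that is the pointwise limit (or at least lies in the Baire hierarchy over) of functions in $C(K)$ — one shows $C(K)$ is weak-star dense in its bidual's relevant slice, or invokes that on the compact $K$ every such $f\circ\delta$ is universally measurable / Baire-measurable, hence $\tau$-Borel, because $K$ is compact Hausdorff — and then $\{f\circ\delta\le c\}$ is $\tau$-Borel. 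Granting this, $\delta^{-1}$ of every weak-Borel subset of $C^*(K)$ meeting $\delta(K)$ is $\tau$-Borel, so $A=\delta^{-1}(\delta(A))$ is $\tau$-Borel, contradicting the existence of a non-Borel subset. I therefore expect the heart of the argument to be the verification that for $f\in C^{**}(K)$ the composite $f\circ\delta$ is $\borel(K,\tau)$-measurable; everything else is bookkeeping with the homeomorphism $\delta$ and the stability of Borel $\sigma$-algebras under restriction to a Borel subspace.
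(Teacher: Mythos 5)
Your proposal has a genuine gap, and the direction you spend most of your effort on is in fact the wrong one. To contradict $\borel(C^*(K),\weak^*)=\borel(C^*(K),\weak)$ you must exhibit a set that is weak-Borel but \emph{not} weak-star Borel; the natural candidate is $\delta(A)$ for the given non-Borel set $A\subseteq K$. You correctly observe that $\delta(A)$ fails to be weak-star Borel (via the homeomorphism and the weak-star closedness of $\delta(K)$), but you never prove that $\delta(A)$ \emph{is} weak-Borel: your sentence ``every such $\delta(A)$ is also a weak Borel set'' only covers Borel $A$, while the final step ``$A=\delta^{-1}(\delta(A))$ is $\tau$-Borel'' silently assumes $\delta(A)$ is weak-Borel for the arbitrary $A$. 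This is precisely the heart of the paper's proof, and it is what is missing: for each $k\in K$ the indicator function of $\set{k}$ is a bounded Borel function on $K$ (singletons are closed), so integration against it defines an element $f_k\in C^{**}(K)$ with $\gen{\delta_x,f_k}=f_k(x)$, whence $\set{\mu\in\delta(K)\tc \gen{\mu,f_k}>1/2}=\set{\delta_k}$. Thus $\delta(K)$ is \emph{discrete} in the weak topology and, being weak-star compact, also weakly closed; consequently $\delta(A)=\delta(K)\cap\bigcup_{k\in A}\set{\mu\tc\gen{\mu,f_k}>1/2}$ is the intersection of a weakly closed set with a weakly open set, hence weak-Borel, for \emph{every} $A\subseteq K$. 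Applied to the non-Borel $A$, this gives the contradiction at once.

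Moreover, the ``key step'' you do try to establish --- that every weak-Borel subset of $C^*(K)$ meets $\delta(K)$ in a $\tau$-Borel set --- is provably false exactly in the situation at hand: by the computation above, $\delta(A)$ is weak-Borel even for non-Borel $A$, and its trace on $\delta(K)$ corresponds to $A$ itself. The flaw in your justification is the reduction of weak-Borel sets to ``countable Boolean operations'' on half-spaces: weak-open sets are arbitrary, possibly uncountable, unions of basic sets, so $\borel(C^*(K),\weak)$ is in general strictly larger than the $\sigma$-algebra generated by the half-spaces $\set{x\tc f(x)\leq c}$ with $f\in C^{**}(K)$. Indeed $\bigcup_{k\in A}\set{\mu\tc\gen{\mu,f_k}>1/2}$ is weakly open, yet its trace on $\delta(K)$ is non-Borel. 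So even granting that $f\circ\delta$ is $\tau$-Borel for every $f\in C^{**}(K)$, the bookkeeping from half-spaces to all weak-Borel sets does not go through, and no hypothesis on $C^{**}(K)$ can repair it; the argument has to run in the opposite direction, as above.
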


\begin{proof}
It is a well known fact that $(K,\tau)$ is homeomorphic to $( K_0 ,\weak^*)$, where $ K_0 =\set{\delta_k\tc k\in K}$, the set of the Dirac measures concentrated in $k\in K$. So $ K_0 $ is a compact subset of $(C^*(K),\weak^*)$ and, in particular, it is closed with respect to the weak topology.

We claim that $ K_0 $ is discrete with respect to the weak topology. Indeed consider the family of functions $\mathcal{A}=\set{f_k:K\ra\R\tc k\in K}$ defined as follows
\[f_k(x)=\eqsys{1& x=k,\\ 0& x\neq k,}\qquad k,x\in K.\]
For every $k\in K$, $f_k$ is a Borel function and can be seen as an element of $C^{**}(K)$ in the following way:
\[_{C^*(K)}\gen{\mu,f_k}_{C^{**}(K)}=\int_Kf_kd\mu,\qquad\mu\in C^*(K).\]
Observe that
\[\set{\mu\in  K_0 \tc f_k(\mu)>\frac{1}{2}}=\set{\delta_k}.\]
So $ K_0 $ is discrete and closed with respect to the weak topology, which implies that $\borel( K_0 ,\weak)=P( K_0 )$. Finally if it holds that $\borel(C^*(K),\weak^*)=\borel(C^*(K),\weak)$, then $\borel( K_0 ,\weak^*)=\borel( K_0 ,\weak)=P( K_0 )$, a contradition.
\end{proof}

We are now interested in the topological properties of compact spaces $K$ such that every subset of $K$ is a Borel set. As one may expect this properties are strictly related to some set-theoretic axioms.

\begin{pro}
Let the continuum hypothesis hold true and let $(X,\tau)$ be a Hausdorff space with a countable network. If every subset of $X$ is a Borel set, then $X$ is finite or countable.
\end{pro}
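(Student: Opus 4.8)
The plan is to bound the cardinality of $\borel(X,\tau)$ by $\continuum$ and then, assuming the continuum hypothesis, to derive a contradiction from Cantor's theorem if $X$ were uncountable. First I would count the open sets: if $\mathcal N$ is a countable network for $X$, then every open set $U$ satisfies $U=\bigcup\set{N\in\mathcal N\tc N\subseteq U}$ (for each $x\in U$ choose $N\in\mathcal N$ with $x\in N\subseteq U$), so $U\mapsto\set{N\in\mathcal N\tc N\subseteq U}$ is an injection of $\tau$ into $P(\mathcal N)$. Hence $\card(\tau)\le 2^{\aleph_0}=\continuum$.

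Next I would use the standard estimate for generated $\sigma$-algebras. Build $\borel(X,\tau)$ by transfinite recursion: put $\mathcal F_0=\tau$, let $\mathcal F_{\alpha+1}$ consist of all complements and all countable unions of members of $\mathcal F_\alpha$, and take unions at limit ordinals. Since $\omega_1$ has uncountable cofinality, every countable subfamily of $\bigcup_{\alpha<\omega_1}\mathcal F_\alpha$ already lies in some $\mathcal F_\alpha$ with $\alpha<\omega_1$, so $\mathcal F_{\omega_1}$ is a $\sigma$-algebra and therefore equals $\borel(X,\tau)$. Using $\continuum^{\aleph_0}=\continuum$ at successor steps, an induction gives $\card(\mathcal F_\alpha)\le\continuum$ for all $\alpha<\omega_1$, hence $\card\pa{\borel(X,\tau)}\le\aleph_1\cdot\continuum=\continuum$. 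This cardinality bookkeeping is the only place where a little care is needed.

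Finally, suppose for contradiction that $X$ is uncountable, so $\card(X)\ge\aleph_1$. Since every subset of $X$ is Borel we have $P(X)=\borel(X,\tau)$, whence $2^{\card(X)}=\card\pa{P(X)}\le\continuum$. On the other hand $2^{\card(X)}\ge 2^{\aleph_1}$, and under the continuum hypothesis $\aleph_1=\continuum$, so Cantor's theorem yields $2^{\aleph_1}=2^{\continuum}>\continuum$ --- a contradiction. Therefore $X$ is finite or countable. I expect no serious obstacle beyond the $\sigma$-algebra cardinality count; note that the continuum hypothesis is invoked only in the last line, precisely to exclude an uncountable $\card(X)\le\continuum$ with $2^{\card(X)}=\continuum$.
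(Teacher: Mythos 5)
Your proof is correct and follows essentially the same route as the paper: bound $\card\pa{\borel(X,\tau)}$ by $\continuum$ and then use $2^{\card(X)}=\card\pa{P(X)}\leq\continuum$ together with the continuum hypothesis. The only difference is that you prove the cardinality bound from scratch (counting the topology via the network and running the transfinite hierarchy of length $\omega_1$), whereas the paper simply cites Fremlin's 4A3F for $\abs{\borel(X,\tau)}\leq\continuum$.
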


\begin{proof}
By \cite[paragraph 4A3F]{Fremlin}, $\abs{\borel(X,\tau)}\leq\continuum$. 
We have
\[2^{\abs{X}}=\abs{\parti(X)}=\abs{\borel(X,\tau)}\leq\continuum.\]
By the continuum hypothesis follows that $\abs{X}\leq\aleph_0$, since $\continuum=2^{\aleph_0}$.
\end{proof}

\begin{thm}\label{teorema mio}
Let the axiom of choice holds true and let $(K,\tau)$ be a Hausdorff compact space. If every subset of $K$ is a Borel set, then $K$ is scattered.
\end{thm}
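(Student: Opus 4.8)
The plan is to argue by contraposition: if $K$ is not scattered, I will exhibit a subset of $K$ that fails to be Borel. The starting point is the classical fact that a compact Hausdorff space $K$ is non-scattered if and only if it admits a continuous surjection onto $[0,1]$ (equivalently, $K$ contains a closed subset that maps continuously onto the Cantor set $\{0,1\}^{\N}$). So assume $\varphi\colon K\to [0,1]$ is a continuous surjection. The idea is then to pull back a non-Borel subset of $[0,1]$ through $\varphi$ and check that its preimage cannot be Borel in $K$ either. Since the axiom of choice is assumed, $[0,1]$ contains subsets that are not Borel (indeed, not even Lebesgue measurable, e.g.\ a Vitali set, or more simply a subset of cardinality $\continuum$ built by transfinite recursion to meet every closed uncountable set without containing any — a Bernstein set).

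The key step is the preservation direction: if $A\subseteq [0,1]$ is such that $\varphi^{-1}(A)$ is Borel in $K$, I want to conclude $A$ is Borel in $[0,1]$. Here I would use the fact that $\varphi\colon K\to[0,1]$ is a continuous surjection between Polish spaces (both $K$, being compact metrizable once we pass to the relevant closed subset, and $[0,1]$ are Polish), so by a suitable selection/uniformization theorem $\varphi$ admits a Borel right inverse $\psi\colon [0,1]\to K$ with $\varphi\circ\psi=\mathrm{id}$ — this follows from the Jankov--von Neumann uniformization theorem or, since $\varphi$ is continuous with compact fibers, from a Kuratowski--Ryll-Nardzewski type selection. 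Then $A=\psi^{-1}(\varphi^{-1}(A))$, and the Borel map $\psi$ pulls back Borel sets to Borel sets, so $A$ is Borel whenever $\varphi^{-1}(A)$ is. Applying this with $A$ a non-Borel (Bernstein) subset of $[0,1]$ gives that $\varphi^{-1}(A)$ is a non-Borel subset of $K$, contradicting the hypothesis that every subset of $K$ is Borel.

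I would organize the write-up as follows. First, recall/state the topological characterization of non-scattered compact Hausdorff spaces (there is a closed subspace $L\subseteq K$ and a continuous surjection $L\twoheadrightarrow \{0,1\}^{\N}$; note a Borel subset of $L$ is automatically Borel in $K$ since $L$ is closed). Second, invoke the axiom of choice to construct a Bernstein set $B\subseteq\{0,1\}^{\N}$, which is not Borel (in fact not measurable). Third, produce a Borel section $\psi$ of the continuous surjection and deduce that $\varphi^{-1}(B)\subseteq L\subseteq K$ is not Borel. Concluding: $K$ non-scattered forces a non-Borel subset, so under the hypothesis $K$ must be scattered.

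The main obstacle I anticipate is the section/uniformization step — making sure the machinery (Polishness of the relevant spaces, existence of a Borel measurable right inverse for a continuous surjection with compact fibers) is correctly invoked, and handling the reduction to the metrizable setting cleanly. An alternative route avoiding explicit sections would be to note that a continuous surjection $\varphi$ between compact metric spaces maps Borel sets to analytic sets and, more to the point, that $A\subseteq [0,1]$ is Borel iff both $A$ and its complement have Borel preimages under $\varphi$; combined with a cardinality/category argument this still isolates a non-Borel preimage, but the section argument is the cleanest. In any case the crux is transporting non-Borelness across $\varphi$, and everything else (the characterization of scatteredness, the Bernstein construction from AC) is standard.
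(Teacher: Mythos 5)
Your overall strategy---produce a right inverse of the continuous surjection $\varphi\colon K\to[0,1]$ and use it to transport a pathological subset of $[0,1]$ back against the hypothesis that every subset of $K$ is Borel---is exactly the skeleton of the paper's proof. But there is a genuine gap in the step where you produce the section: you reduce to the Polish setting by claiming that $K$ becomes compact metrizable ``once we pass to the relevant closed subset.'' That reduction fails. A compact Hausdorff space mapping onto $[0,1]$ need not contain any infinite metrizable closed subspace: $\beta\N$ is the standard example, since it maps continuously onto $[0,1]$ (indeed onto every separable compact Hausdorff space) while every infinite closed subset of $\beta\N$ contains a copy of $\beta\N$ and hence is non-metrizable. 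Consequently the Jankov--von Neumann and Kuratowski--Ryll-Nardzewski theorems you invoke, and likewise your alternative route through analytic sets and Suslin's theorem, are simply not available: they live in the separable metrizable world, whereas the theorem concerns arbitrary Hausdorff compacta. (Even in the genuinely Polish case, Jankov--von Neumann only yields a $\sigma(\Sigma^1_1)$-measurable uniformization, not a Borel one; there you would want the Kuratowski--Ryll-Nardzewski selection for the compact-valued usc map $x\mapsto\varphi^{-1}(x)$.)

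The paper's proof repairs exactly this point. It regards $F=f^{-1}\colon[0,1]\to\mathcal{K}^*(K)$ as an usco multifunction from the Baire space $[0,1]$ into the completely regular space $K$ and applies Graf's selection theorem, which produces a selection $f_0$ that is only $\baire_p([0,1],\tau_\R)$--$\borel(K,\tau)$-measurable, i.e.\ measurable with respect to the $\sigma$-algebra of sets with the Baire property on the domain. This weaker measurability forces a different endgame: since $f_0$ is injective (the fibers of $f$ are pairwise disjoint) and every subset of $K$ is Borel, $A=f_0^{-1}(f_0(A))$ has the Baire property for every $A\subseteq[0,1]$, and one contradicts the existence, under the axiom of choice, of a subset of $[0,1]$ \emph{without the Baire property}---rather than of a non-Borel subset. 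To salvage your write-up you must either restrict to metrizable $K$ (which is not the theorem) or replace your selection step by a selection theorem valid for usco maps into arbitrary completely regular spaces and adjust the target pathology accordingly, which is precisely what the paper does.
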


\begin{proof}%[Theorem \ref{teorema mio}]
By \cite{Pelczynski_Semadeni} a Hausdorff compact space $K$ is scattered if, and only if, $[0,1]$ is not a continuous image of $K$. By contradition let $f:K\ra[0,1]$ be a continuous surjection and
consider the multifunction
\[F=f^{-1}:[0,1]\longrightarrow \mathcal{K}^*(K),\]
where $\mathcal{K}^*(K)$ is the colletion of of all compact non-empty subset of $K$.  $F$ is a compact and non-empty valued multifunction, and recalling that for every $B\in P([0,1])$
\[F^{-1}(B):=\set{x\in [0,1]\tc F(x)\cap B\neq\emptyset}=f(B)\]
and that continuous function from a compact space to an Hausdorff space are closed (see \cite[pag. 169]{Engelking}), we obtain that $F$ is an usco map. Since $[0,1]$ is a compact second countable space, in
particular a Baire space, and $K$ is a completely regular space (see \cite[pag. 196]{Engelking}) we can apply \cite[Theorem 6]{Graf} and get
\[f_0:[0,1]\longrightarrow K\]
a $\baire_p([0,1],\tau_{\R})$--$\borel(K,\tau)$-selection of $F$.

We claim that $f_0$ is injective. Indeed if $x,y\in [0,1]$ and $f_0(x)=f_0(y)$, then $f_0(x)\in f^{-1}(x)$ and $f_0(y)\in f^{-1}(y)$. But we know that $f^{-1}(x)\cap f^{-1}(y)=\emptyset$, so $x=y$. Recalling that for injective function it holds 
\[f_0^{-1}(f_0(A))=A\qquad\text{for every } A\in P([0,1]),\]
and every subset of $K$ is a Borel set we have $\baire_p([0,1],\tau_\R)=P([0,1])$, which is a contradiction by \cite[Theorem 5.5]{Oxtoby}.
\end{proof}

One may think that a space with only Borel subsets should be meager, but by \cite{Shelah} the existence of a measurable cardinal is equiconsistent with the existence of a non-meager T$_4$ space with no isolated point in which every subset is the union of an open and a closed set.

\begin{cor}\label{corollary}
If every subset of $K$ is a Borel set, then
\[\borel(\con^*(K),\weak)=\borel(\con^*(K),\norm{\cdot}).\]
Furthermore if $\borel(\con^*(K),\weak^*)=\borel(\con^*(K),\weak)$, then $\borel(\con^*(K),\weak^*)=\borel(\con^*(K),\norm{\cdot})$.
\end{cor}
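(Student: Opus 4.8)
The plan is to derive the corollary from the two preceding results by a standard renorming/metrizability-free argument about Borel structures on dual balls. The key observation is that $C^*(K)$, with any of the three topologies in question, is $\sigma$-compact in the $\weak^*$ topology: writing $B_n$ for the closed ball of radius $n$ in $C^*(K)$, each $B_n$ is $\weak^*$-compact by Banach--Alaoglu, and $C^*(K)=\bigcup_n B_n$. Hence it suffices to prove the coincidence of the relevant Borel $\sigma$-algebras on each $B_n$ separately, and then take countable unions. Rescaling, I may as well work on the unit ball $B=B_{C^*(K)}$.

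First I would record the trivial inclusions: on $B$ we always have $\borel(B,\weak^*)\subseteq\borel(B,\weak)\subseteq\borel(B,\norm{\cdot})$, since the identity map is continuous from the finer to the coarser topology in each case. So the content is the reverse inclusion $\borel(B,\norm{\cdot})\subseteq\borel(B,\weak^*)$. For the first assertion of the corollary I actually only need $\borel(B,\norm{\cdot})\subseteq\borel(B,\weak)$. The idea is to use Proposition~3.1 (the first, unnumbered Proposition in the excerpt): since every subset of $K$ is a Borel set, its contrapositive tells us nothing directly — wait, one must read it correctly. Proposition~3.1 says that if $K$ contains a non-Borel subset then $\borel(C^*(K),\weak^*)\neq\borel(C^*(K),\weak)$; taking the contrapositive only gives that equality of those two forces every subset of $K$ to be Borel, which is the wrong direction. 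So instead the first part of the corollary must be proved directly: with every subset of $K$ Borel, one shows that the norm-Borel sets of $C^*(K)$ are already weak-Borel. The natural route is via the functions $f_k$ from the proof of Proposition~3.1, which are Borel on $K$ hence elements of $C^{**}(K)$, and which, because every subset of $K$ is Borel, generate the full Borel structure of $K$; one then argues that the weak topology on bounded sets of $C^*(K)$ is determined by enough such evaluation functionals (together with genuine elements of $C(K)$) to recover the norm topology's Borel sets. Concretely, on $B$ the norm is weak lower semicontinuous, so norm-balls are weakly closed, and a separability-type argument on each $B_n$ — using that $K$ with the discrete topology coming from all subsets being Borel is "small enough" in the sense needed — lets one write every norm-open set as a countable weak-Borel combination.

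The cleanest packaging, which I would adopt, is: by Theorem~\ref{teorema mio}, $K$ is scattered; for scattered compact $K$ the space $C(K)$ is Asplund (indeed $C^*(K)=\ell^1(K)$ isometrically when $K$ is scattered, via the decomposition of measures into atomic parts), and on $\ell^1(K)$ the weak and norm topologies have the same bounded Borel sets because the norm is a countable supremum of weak-continuous functions on each ball and $\ell^1(K)$-balls are weakly closed with the weak topology being "Polish-like" on them after the reduction. Then $\borel(\con^*(K),\weak)=\borel(\con^*(K),\norm{\cdot})$ follows by the $\sigma$-compactness reduction above. For the furthermore: assuming $\borel(\con^*(K),\weak^*)=\borel(\con^*(K),\weak)$, simply chain this with the equality just proved to get $\borel(\con^*(K),\weak^*)=\borel(\con^*(K),\norm{\cdot})$.

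The main obstacle I anticipate is the first part — establishing $\borel(C^*(K),\weak)=\borel(C^*(K),\norm{\cdot})$ from "every subset of $K$ is Borel" — because the hypothesis is a statement about $K$ while the conclusion is about $C^*(K)$, and bridging them requires either the structural fact that scattered $K$ gives $C^*(K)\cong\ell^1(K)$ plus a lemma that weak and norm Borel structures coincide on $\ell^1(\Gamma)$ for arbitrary $\Gamma$, or a direct argument with the evaluation functionals $f_k\in C^{**}(K)$. I expect the author uses one of these; in the write-up I would invoke Theorem~\ref{teorema mio} to get scatteredness, cite the isometry $C^*(K)\cong\ell^1(K)$ for scattered $K$, and then quote (or quickly prove) that for $\ell^1(\Gamma)$ the weak and norm Borel $\sigma$-algebras agree, the point being that the norm is weakly lower semicontinuous and the weak topology restricted to bounded sets is angelic with a network that makes norm-balls weak-Borel. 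The second, "furthermore" assertion is then immediate by transitivity of equality.
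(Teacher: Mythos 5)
Your final packaging is sound and lands close to the paper's, but the intermediate step is genuinely different. The paper argues: by Theorem \ref{teorema mio} $K$ is scattered, hence $\con(K)$ is an Asplund space (Deville--Godefroy--Zizler, Lemma VI.8.3), hence $\con^*(K)$ admits a dual LUR renorming (Fabian--Godefroy), and then Edgar's Corollary 2.4 (a Banach space with an equivalent Kadec norm has coinciding weak and norm Borel sets) gives the first assertion; the ``furthermore'' is transitivity of equality, exactly as you say. You instead pass from scatteredness to the isometry $\con^*(K)\cong\ell^1(K)$ (Rudin: regular Borel measures on scattered compacta are purely atomic) and then need that the weak and norm Borel $\sigma$-algebras agree on $\ell^1(\Gamma)$ for an arbitrary index set $\Gamma$. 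That lemma is true and citable (e.g.\ Troyanski's equivalent LUR norm on $\ell^1(\Gamma)$ combined with the same result of Edgar), so your route works and is arguably more concrete, trading Asplund-space machinery for a classical structure theorem. However, the justification you sketch for the $\ell^1(\Gamma)$ step does not hold up: for uncountable $\Gamma$ the $\ell^1$-norm is \emph{not} a countable supremum of weakly continuous functions on the ball (it is a supremum over all finite subsets of $\Gamma$, of which there are uncountably many), and the ``Polish-like''/angelicity considerations you invoke are unavailable in the nonseparable setting, which is precisely the case of interest here (e.g.\ $K=[0,\omega_1]$ is scattered and uncountable). So you must cite a renorming result at that point rather than ``quickly prove'' it, and once you do, you are effectively back to the paper's Kadec/LUR argument. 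The preliminary material in your write-up (the $\sigma$-compactness reduction, the evaluation functionals $f_k$) is abandoned in your own final version and is not needed.
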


\begin{proof}
By Theorem \ref{teorema mio} $K$ is scattered and by \cite[Lemma 8.3 of Chapter VI]{Deville} $\con(K)$ is an Asplund space. So by \cite{Fabian_Godefroy} $\con^*(K)$ admits a LUR renorming, and using \cite[Corollary 2.4]{Edgar2} one obtain the thesis.
\end{proof}

We want to stress that the furthermore part of Corollary \ref{corollary} is not typical for a generic dual space. Indeed by \cite[Proposition 8]{Edgar80} the dual of the James space $J(\omega_1)$ has a weak Borel set which is not a weak-star Borel set, but since the dual of $J(\omega_1)$ is a Asplund space then, by \cite[Lemma 8.3 of Chapter VI]{Deville} and \cite[Corollary 2.4]{Edgar2}, it holds $\borel(J^*(\omega_1),\weak)=\borel(J^*(\omega_1),\norm{\cdot})$.

\begin{rmk}
By \cite[Corollary 4.4]{Raja} $\con^*(K)$ admits a $\weak^*$-Kadets norm if, and only if, $K$ is a countable union of relatively discrete subsets, so every subset of $K$ is a Borel set. But if we assume the Martin axiom and the negation of the continuum hypothesis, then there exists an uncountable subset $X$ of $\R$ such that every subset of $X$ is a relative $\F_\sigma$, see \cite{Martin_Solovay}. In particular if we consider the one-point compatification $\alpha(X)$ of $X$, then every discrete subset of $\alpha(X)$ is countable. So $\alpha(X)$ is not the countable union of relative discrete subsets.
\end{rmk}

\bibliographystyle{acm}
\nocite{*} \addcontentsline{toc}{chapter}{Bibliografia}
\bibliography{bibborstrc}
\markboth{\textsc{References}}{\textsc{References}}

\end{document}